\documentclass[12pt,oneside]{amsart}

\usepackage[margin=1.1in]{geometry}
\usepackage{amsmath,amsfonts,amssymb,amsthm,mathtools}
\usepackage{microtype}
\usepackage[colorlinks=true,linkcolor=blue,citecolor=blue,urlcolor=blue]{hyperref}
\numberwithin{equation}{section}

\newtheorem{theorem}{Theorem}[section]
\newtheorem{proposition}[theorem]{Proposition}
\newtheorem{lemma}[theorem]{Lemma}

\theoremstyle{definition}
\newtheorem{definition}[theorem]{Definition}
\theoremstyle{remark}
\newtheorem{remark}[theorem]{Remark}

\newcommand{\Z}{\mathbb Z}

\newcommand{\cU}{\mathcal U}

\title[Growth gaps and generating sets]{Growth gaps and generating sets}
\author{Aleksander Skenderi}
\address{Department of Mathematics, Jagiellonian University, Krak\'ow, Poland}
\email{askenderi@wisc.edu}

\author{Gal Yehuda}
\address{Department of Mathematics, Yale University, New Haven, CT, USA}
\email{gal.yehuda@yale.edu}

\subjclass[2020]{20F67, 20E07, 22E40}
\keywords{growth of subgroups, growth gap, hyperbolic groups,
amenable actions, lattices}
\date{}

\begin{document}

\begin{abstract}
We show that the existence of a growth gap for infinite-index subgroups of a given finitely genrated group can
depend on the finite generating set. 
More precisely, for any irreducible lattice $\Lambda$ in a higher rank semisimple Lie group $G$ with Kazhdan's property (T), the group $\Lambda \times \Lambda$ admits one finite symmetric generating set with a growth gap and another without a growth gap. 
We also prove that the growth gap can be made arbitrarily small.
In contrast, for a non-elementary hyperbolic group the existence of a growth gap is independent of the finite generating set.
\end{abstract}

\maketitle

\section{Introduction}
A guiding philosophy in group theory is the following: to understand a group, one should understand how it acts on naturally related sets and spaces. If $(X,d)$ is a metric space and $\Gamma$ is a group acting properly discontinuously by isometries on $X$, then, since any compact subset intersects any of its $\Gamma$-translates only finitely many times, it is natural to wonder how quickly the $\Gamma$-orbits in $X$ ``escape to infinity.'' More precisely, one wishes to understand the \emph{exponential growth rate of $\Gamma$ with respect to $X$}, which is defined as
\begin{align*}
\omega(\Gamma, X) = \limsup_{n \to \infty} \frac{1}{n} \log \big| \{\gamma \in \Gamma : d(o, \gamma o) \leq n \} \big|,
\end{align*} where $o$ is a point of $X$. Notice that, as $\Gamma$ acts on $X$ by isometries, this definition is independent of the choice $x \in X$, hence is well-defined. However, $\omega(\Gamma, X)$ does depend on the choice of space $X$, with $\omega(\Gamma, X)$ potentially revealing much about the structure and properties of the group $\Gamma$ depending on the choice of space $X$. For instance, if $\Gamma$ is a non-elementary discrete subgroup of isometries of the real hyperbolic space $X = \mathbb{H}^{n}$, then by the celebrated theorem of Bishop--Jones \cite{BishopJones1997}, $\omega(\Gamma,X)$ also measures the Hausdorff dimension of the conical limit set $\Lambda_{\mathrm{con}}(\Gamma) \subset \partial \mathbb{H}^{n}$ of $\Gamma$.
\par 
For certain groups $\Gamma$ and well-chosen spaces $X$, the exponential growth rate $\omega(\Delta, X)$ of a subgroup $\Delta < \Gamma$ reveals important group theoretic properties of $\Delta$, such as whether it is of finite or of infinite index in $\Gamma$. For instance, the celebrated gap theorems of Corlette \cite{Corlette1990} and Leuzinger \cite{Leuzinger2003} (see also related work of Quint \cite{Quint2003}) state that if $\Gamma$ is a lattice in a semisimple Lie group $G$ with Kazhdan's property (T), and if $X$ is the Riemannian symmetric space of $G$, then there exists a constant $\epsilon = \epsilon(G) > 0$, depending only on the Lie group $G$, so that a subgroup $\Delta < \Gamma$ satisfies $\omega(\Delta, X) \leq \omega(\Gamma, X) - \epsilon$ if and only if $[\Gamma : \Delta] = \infty$ (and, therefore, $\omega(\Delta,X) = \omega(\Gamma, X)$ if and only if $[\Gamma : \Delta] < \infty$).
\par 
In light of the theorems of Corlette and Leuzinger, it is natural to wonder whether similar results persist for other groups $\Gamma$ or spaces $X$. In this article, we will be primarily interested in finitely generated groups, so let $\Gamma$ be such a group and let $S$ be a finite symmetric generating set.
We write $|g|_S$ for the corresponding word length and
\[
  B_S(n)=\{ \gamma \in \Gamma : |\gamma|_S\le n\}.
\]
For a subset $A\subseteq \Gamma$, its exponential growth exponent with respect to
$S$ is
\[
  \omega_S(A)
  =
  \limsup_{n\to\infty}\frac1n\log |A\cap B_S(n)|.
\]
In particular, if $\Delta<\Gamma$ is a subgroup, then $\omega_S(\Gamma)$ is computed using
the restriction to $\Delta$ of the ambient word metric on $\Gamma$, rather than a word
metric coming from a generating set of $\Delta$. If $X$ denotes the Cayley graph of $\Gamma$ with respect to $S$, then in terms of the previous notation, we have $\omega(\Delta,X) = \omega_{S}(\Delta)$. The following is the key definition of this paper.
\begin{definition}
The \emph{top growth gap} of $(G,S)$ is
\[
  \operatorname{Gap}_S(G)
  =
  \omega_S(G)
  -
  \sup_{\substack{H<G\\ {}[G:H]=\infty}}\omega_S(H).
\]
We say that $(G,S)$ \emph{has a growth gap} if
$\operatorname{Gap}_S(G)>0$.
\end{definition}
We emphasize that the absence of a growth gap means that there are infinite-index
subgroups whose growth exponents approach $\omega_S(G)$; it does not require
that any one subgroup attain the growth exponent $\omega_{S}(G)$.

A priori, it may be that the existence of a growth gap depends on the choice
of generating set for $\Gamma$. The question whether having a growth gap is
independent of the finite generating set was posed by Li and Wise
\cite[Problem~9.1]{LiWise2020}, and also appears in
\cite[Problem~1.5]{CoulonLouvarisWiseYehuda2026}. Our first theorem shows that this independence fails in general.

\begin{theorem}\label{thm:intro-product}
Let $G$ be a connected semisimple real Lie group with finite center, no compact factors, real rank at least 2, and with Kazhdan's property (T). Let $\Lambda < G$ be an irreducible lattice. Then the group
\[
  \Lambda \times \Lambda
\]
admits two finite symmetric generating sets $S_1$ and $S_\infty$ such that it
has no growth gap with respect to $S_1$ and has a growth gap with respect to
$S_\infty$.
\end{theorem}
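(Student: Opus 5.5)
The plan is to exploit the product structure: the growth exponent of a product with respect to a product-type generating set depends on how the generating set combines the two factors. Write $\Gamma = \Lambda\times\Lambda$ and fix a finite symmetric generating set $T$ of $\Lambda$.

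For $S_1$ I take the "$\ell^1$" generating set $S_1 = (T\times\{1\})\cup(\{1\}\times T)$. Then $|(a,b)|_{S_1} = |a|_T+|b|_T$. Hence $|B_{S_1}(n)|$ is the convolution of the sphere counts of $\Lambda$, and $\omega_{S_1}(\Gamma)=\omega_T(\Lambda)=:\omega$, because a sum of exponential counts over splittings $n=k+(n-k)$ has exponent equal to the larger exponent. The infinite-index subgroup $H=\Lambda\times\{1\}$ then satisfies $\omega_{S_1}(H)=\omega_T(\Lambda)=\omega_{S_1}(\Gamma)$. So there is no gap at all; in fact the supremum is attained.

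For $S_\infty$ I take the "$\ell^\infty$" set $S_\infty = T'\times T'$ with $T'=T\cup\{1\}$. Then $|(a,b)|_{S_\infty}=\max(|a|_T,|b|_T)$, so $|B_{S_\infty}(n)|=|B_T(n)|^2$ and $\omega_{S_\infty}(\Gamma)=2\omega$. Now let $H<\Gamma$ have infinite index and let $p_1,p_2$ be the projections.
\begin{itemize}
\item If $H\cap(\Lambda\times\{1\})$ has infinite index in $\Lambda$, I use the fibration $H\cap B(n)\to p_2(H)\cap B_T(n)$. Each fiber is a coset of $H\cap(\Lambda\times\{1\})$ meeting a ball of radius $n$, so it has size at most $|K\cap B_T(2n)|$ with $K=p_1(H\cap (\Lambda\times 1))$. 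This gives $\omega_{S_\infty}(H)\le \omega_T(K)\cdot(\text{factor})+\omega$.
\item Rather than tracking that factor, I use a translate trick: $|(K g)\cap B_T(n)|\le |K\cap B_T(2n)|$. The resulting factor of $2$ is dangerous, so I want a sharper statement. Instead I use the Corlette–Leuzinger-type gap for $\Lambda$ with respect to the word metric $T$. I expect this is stated or assumed earlier in the paper, presumably via property (T) and Kesten-type amenability/cogrowth arguments: infinite-index subgroups $K$ of $\Lambda$ satisfy $\omega_T(Kg)\le\omega-\epsilon$ uniformly over cosets. Property (T) makes the quasi-regular representation on $\ell^2(\Lambda/K)$ have a uniform spectral gap, and the usual spectral argument bounds the count of any coset. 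Then $\omega_{S_\infty}(H)\le 2\omega-\epsilon$.
\item Symmetrically, the same bound holds if $H\cap(1\times\Lambda)$ has infinite index in $\Lambda$.
\item Otherwise both intersections have finite index. Then $H$ contains a finite-index subgroup $K_1\times K_2$, so $H$ has finite index, a contradiction.
\end{itemize}

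The main obstacle is the uniform coset bound, i.e.\ a growth gap for $(\Lambda,T)$ that holds for all cosets of infinite-index subgroups with a uniform $\epsilon$. Word metrics are not the symmetric-space metric, so Corlette–Leuzinger does not transfer directly. I would first try the (T) spectral route. Property (T) gives a uniform $\delta>0$ such that the random walk operator on $\ell^2(\Lambda/K)$ has norm $\le 1-\delta$ for every infinite-index $K$, since there are no almost invariant vectors. A standard counting argument then bounds the number of elements of norm $n$ in a coset by roughly $|B(n)|\cdot(1-\delta')^n$, and this requires a spherical-measure comparison. That comparison is where care is needed: I would use the averaging operator over spheres or balls in place of the simple random walk, and invoke Shalom/Kazhdan uniformity over all unitary representations without invariant vectors, which includes $\ell^2(\Lambda/K)$ precisely because $K$ has infinite index.
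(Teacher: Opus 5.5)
\textbf{There is a genuine gap in the $S_\infty$ half.} Your treatment of $S_1$ is correct and is the paper's argument. For $S_\infty$, however, you fix an \emph{arbitrary} generating set $T$ of $\Lambda$. You then need a growth gap for the word metric $|\cdot|_T$, uniform over all cosets of all infinite-index subgroups, and you plan to derive it from the Kazhdan spectral gap on $\ell^2(\Lambda/K)$.

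Property (T) alone cannot give this. The group $\Lambda\times\Lambda$ itself has property (T), yet your own first paragraph shows that, for $S_1$, the infinite-index subgroup $\Lambda\times\{1\}$ has the full exponent. So any argument of the form ``(T) plus a word metric yields a uniform gap'' is refuted by the very statement you are proving.

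Concretely, the spectral gap only bounds return probabilities: $\mu^{*n}(K)\le(1-\delta)^n$ for a suitable (lazy) walk $\mu$ supported on $T$. Each element of the $n$-sphere receives mass at least $C^{-n}$, with $C$ of order $|T|$. So you get only $\omega_T(K)\le\log\bigl(C(1-\delta)\bigr)$, which has no reason to lie below $\omega_T(\Lambda)$. The exponential decay of sphere averages that would close your argument is what Lemma~\ref{lem:spherical-contraction} provides for hyperbolic groups, and nothing analogous is available here for word metrics. A telltale sign is that your argument never uses irreducibility or real rank $\ge 2$.

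\textbf{How the paper avoids both problems.} First, $T$ is not arbitrary. In Proposition~\ref{prop:base-gap} the paper takes $T=U\cup A_r\cup A_r^{-1}$, where $A_r$ consists of at least $e^{\beta r}$ elements of a discrete Anosov free semigroup whose critical exponent exceeds $v-\varepsilon_0$. These elements all have the same $F$-length and displacement in $[r,r+1)$, so they freely generate a semigroup. Hence $\omega_T(\Lambda)\ge\beta r$. Every generator moves $o$ by at most $r+1$, so Corlette--Leuzinger in the symmetric space gives
\[
\omega_T(H)\le(r+1)(v-\varepsilon_0)<\beta r
\]
for every infinite-index $H<\Lambda$.

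Second, the paper never needs cosets, because it splits on the projections $P_i=p_i(H)$ rather than the kernels. If $P_1$ has infinite index, $H\cap B_{S_\infty}(n)$ injects into $(P_1\cap B_T(n))\times B_T(n)$, and only the subgroup gap is needed. If both $P_i$ have finite index, Goursat's lemma and the Margulis normal subgroup theorem force $N_1$ and $N_2$ to be finite, which gives $\omega_{S_\infty}(H)\le h$.

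Your fibration over $p_2(H)$, with fibres the cosets of $N_1$, is what created the need for a uniform coset bound. The normal subgroup theorem is the input that rules out an infinite, infinite-index $N_1$ when $P_1$ has finite index.
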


Although the existence of a growth gap may depend on the generating set, its
size cannot be bounded uniformly over all generating sets for the lattices
appearing in Theorem~\ref{thm:intro-product}. In fact, the following more
general statement holds.

\begin{theorem}\label{thm:intro-small-gaps}
Let $\Gamma$ be a finitely generated group containing an infinite-index
nonabelian free subgroup. Then there is a sequence of finite symmetric
generating sets $(S_m)_{m\geq 2}$ of $\Gamma$ such that
\[
  \lim_{m\to\infty}\operatorname{Gap}_{S_m}(\Gamma)=0.
\]
Equivalently,
\[
  \inf\bigl\{\operatorname{Gap}_S(\Gamma):
  S\text{ is a finite symmetric generating set of }\Gamma\bigr\}=0.
\]
In particular, this conclusion holds for
every irreducible lattice $\Lambda<G$ appearing in
Theorem~\ref{thm:intro-product}.
\end{theorem}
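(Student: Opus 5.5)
Let $F=\langle a,b\rangle<\Gamma$ be an infinite-index nonabelian free subgroup and fix a finite symmetric generating set $T$ of $\Gamma$. The idea is to let the free subgroup dominate the count of short words. For $m\ge 2$ we put
\[
  S_m = T \cup \{ w^{\pm1} : w \in F,\ |w|_{\{a,b\}} \le m \},
\]
which is finite, symmetric and generates $\Gamma$. In $S_m$-length, $F$ contains a copy of the $\{a,b\}$-ball of radius $mn$ inside $B_{S_m}(n)$. Hence
\[
  \omega_{S_m}(F) \ge m \log 3.
\]
The gap is at most $\omega_{S_m}(\Gamma)-\omega_{S_m}(F)$. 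So it suffices to show that
\[
  \omega_{S_m}(\Gamma) \le m\log 3 + o(1) \qquad (m\to\infty).
\]

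To bound $\omega_{S_m}(\Gamma)$ from above, we count words. Each element of $B_{S_m}(n)$ is a product of at most $n$ letters. Each letter is either in $T$ or is a reduced $F$-word of length at most $m$.

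The naive count gives $|S_m|^n$, with $|S_m|\approx 4\cdot 3^{m-1}\cdot\tfrac32$. This yields $\omega_{S_m}(\Gamma)\le \log|S_m| = m\log 3 + O(1)$. That is not good enough: it leaves an $O(1)$ discrepancy, since $\log(|S_m|/3^m)$ does not tend to $0$.

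We therefore refine the lower bound rather than the upper bound, comparing both with the same quantity. Instead of reduced words we work with $F$-letters counted with multiplicity. Let $u_m=|\{w\in F: 1\le |w|\le m\}|\sim 2\cdot 3^m$. The number of $F$-elements obtained as products of at most $n$ such letters is at least the number of products of $n$ letters. Those products collapse because of concatenation ambiguities.

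A cleaner route is to choose the letters as a free basis. We let $F_m<F$ be a free subgroup of large rank $r_m$, for instance the subgroup generated by a free basis of a finite-index subgroup of $F$. We take
\[
  S_m = T\cup B_m^{\pm1},
\]
where $B_m$ is a free basis of $F_m$ with $r_m\to\infty$. Then $F_m$ has infinite index in $\Gamma$. With respect to $S_m$, the set $B_{S_m}(n)\cap F_m$ contains all reduced words of length $n$ in $B_m$. Therefore
\[
  \omega_{S_m}(F_m)\ge \log(2r_m-1).
\]
Meanwhile the word count gives
\[
  \omega_{S_m}(\Gamma)\le \log(2r_m+|T|).
\]
Hence
\[
  \operatorname{Gap}_{S_m}(\Gamma)\le \log\frac{2r_m+|T|}{2r_m-1}\to 0.
\]
This settles the theorem.

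The only point needing care is that $B_m$ is a genuine free basis with $r_m\to\infty$. This holds because $F$ contains free subgroups of every finite rank, for instance $\langle a^k b a^{-k}: 1\le k\le r\rangle$. We use such a subgroup directly rather than a finite-index subgroup of $F$. Its index in $\Gamma$ is infinite because it lies inside $F$, which already has infinite index. Note that the infinite-index hypothesis on $F$ is used exactly here, and the free-basis hypothesis is what makes the reduced-word lower bound valid.

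For the final claim, an irreducible lattice $\Lambda$ as in Theorem~\ref{thm:intro-product} contains a nonabelian free subgroup by the Tits alternative. Its infinite index follows because a finite-index free subgroup would make $\Lambda$ virtually free. A virtually free group cannot have property (T) unless it is finite, whereas $\Lambda$ inherits property (T) from $G$ and is infinite.
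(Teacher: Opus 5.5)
Your final construction is correct and is essentially the paper's: you adjoin to a fixed generating set a free basis $\{a^kba^{-k}\}_{1\le k\le r}$ of growing rank inside the infinite-index free subgroup. You then compare the reduced-word lower bound $\log(2r-1)$ for that subgroup with the crude bound $\log(2r+|T|)$ for $\Gamma$; the paper uses $b^iab^{-i}$ and a non-backtracking count giving $\log(2m+q-1)$, which changes nothing. In a write-up, drop the abandoned first attempts. For the lattice case, also note that $\Lambda$ is not virtually solvable (e.g.\ by Borel density) before invoking the Tits alternative.
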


In contrast, for hyperbolic groups the existence, rather than the size, of a
growth gap is independent of the finite generating set.

\begin{theorem}\label{thm:intro-hyperbolic}
Let $G$ be a non-elementary hyperbolic group. If $G$ has a growth gap with
respect to one finite symmetric generating set, then it has a growth gap with
respect to every finite symmetric generating set.
\end{theorem}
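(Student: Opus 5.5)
The plan is to characterize the existence of a growth gap for a non-elementary hyperbolic group by a property that refers only to the permutation actions $G \curvearrowright G/H$ for infinite-index $H$, and is visibly independent of the generating set. The property I aim for is \emph{uniform non-amenability} of this family of actions. Fix $S$ and let $\rho_S(H)$ be the spectral radius of the simple random walk on the Schreier graph $\mathrm{Sch}(G/H,S)$, i.e.\ the norm of the Markov operator $\frac{1}{|S|}\sum_{s\in S}\lambda_{G/H}(s)$ on $\ell^2(G/H)$. Say the family is uniformly non-amenable if
\[
\sup_{[G:H]=\infty}\rho_S(H)<1 .
\]
Equivalently, there is $\epsilon>0$ such that for every infinite-index $H$ and every unit vector $v\in\ell^2(G/H)$ some $s\in S$ satisfies $\|sv-v\|\ge\epsilon$.

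This condition does not depend on $S$, by the usual Kazhdan-constant argument. Each $s\in S$ is a word of length at most $L$ in $S'$. So if $\|sv-v\|\ge\epsilon$, the triangle inequality gives some $s'\in S'$ with $\|s'v-v\|\ge\epsilon/L$. The theorem therefore follows once I prove, for a fixed $S$:
\[
\operatorname{Gap}_S(G)>0 \iff \sup_{[G:H]=\infty}\rho_S(H)<1 .
\]

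\textbf{Step 1 (spectral gap implies growth gap).} I would use a quantitative form of the Coulon--Dal'bo--Sambusetti / Coulon--Dougall--Schapira--Tapie theorem. That theorem says $\omega_S(H)=\omega_S(G)$ iff $H$ is co-amenable in $G$. The twisted Patterson--Sullivan approach of those papers compares the Poincaré series of $H$ with the Poincaré series of $G$ twisted by the unitary representation $\ell^2(G/H)$. The key observation is that the decay of that twisted series is governed by the almost-invariance of vectors in $\ell^2(G/H)$. I would track the constants, using the Patterson--Sullivan measure of $G$ and the shadow lemma, which depend only on $(G,S)$. The target is a bound
\[
\omega_S(H)\le\omega_S(G)-f(1-\rho_S(H))
\]
with $f$ positive and increasing. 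Taking the supremum over $H$ then gives a gap.

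\textbf{Step 2 (growth gap implies spectral gap).} I would argue by contrapositive. Suppose $\rho_S(H_n)\to1$ along infinite-index subgroups $H_n$. Then the Schreier graphs $\mathrm{Sch}(G/H_n,S)$ contain Følner-type sets $F_n$ with $|\partial F_n|/|F_n|\to0$. I would count elements of $H_n$ as follows. Among the roughly $e^{\omega_S(G)k}$ elements of the sphere of radius $k$ in $G$, take those whose action moves a point of $F_n$ to another point of $F_n$. Coset-pigeonholing then yields at least $e^{(\omega_S(G)-\delta_n)k}|F_n|^{-1}$ elements of $H_n$ of length about $k$, up to bounded correction. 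Here I use that hyperbolicity lets me concatenate geodesic words with bounded backtracking, so these elements generate exponentially many products in $H_n$ with controlled length. The rate of this count is $\omega_S(G)-\delta_n$ with $\delta_n\to0$. So $\sup\omega_S(H_n)=\omega_S(G)$, and there is no growth gap.

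I expect the main obstacle to be getting \emph{uniform} (in $H$) constants, most likely in Step 1. The existing theorems only characterize the equality case $\omega_S(H)=\omega_S(G)$. Making them quantitative requires that the shadow-lemma and mixing constants for the $G$-action on $\partial G$ enter independently of $H$. My first attempt would be to run the argument with $\ell^2(G/H)$ replaced by an arbitrary unitary representation without almost-invariant vectors, so that only the Kazhdan-type constant of the representation appears in the bound.
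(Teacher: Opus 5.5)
Your reduction is sound, and it is the paper's argument in different form. The condition $\sup_{[G:H]=\infty}\rho_S(H)<1$ is equivalent to Property~(FM): if $\rho_S(H_n)\to1$, the action on $\bigsqcup_n G/H_n$ is amenable with no finite orbit. Its $S$-independence is the Kazhdan-constant argument you give. Your Step~1 is exactly the direction the paper imports from \cite[Theorem~6.15]{CoulonDougallSchapiraTapie2025}.

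The genuine gap is in Step~2, which you treat as routine. Write $\omega=\omega_S(G)$. Two estimates pull against each other:
\begin{itemize}
\item Your lower bound on how many $g$ of length $k$ send points of $F_n$ into $F_n$ only holds for $k\lesssim |F_n|/|\partial F_n|$.
\item Pigeonholing over pairs $(x,y)\in F_n\times F_n$ costs a factor $|F_n|$.
\end{itemize}
To reach rate $\omega-\delta_n$ you would need $\log|F_n|=o(|F_n|/|\partial F_n|)$, and this fails in general.

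For example, take $G=F_2$ and $H=\ker(F_2\to BS(1,2))$. The Schreier graph is a Cayley graph of an amenable group of exponential growth. By the Coulhon--Saloff-Coste inequality, every finite $F$ satisfies $|F|/|\partial F|\le C\log(2|F|)$. So $|F|$ is at least exponential in every admissible $k$, and your count $e^{\omega k}/|F|$ has rate at most $\omega-c$ for a fixed $c>0$. Your method therefore cannot even recover $\omega_S(H)=\omega_S(G)$ for a single co-amenable $H$. That is the case $\rho_S(H)=1$ of Step~2, and it is true by \cite{CoulonDalboSambusetti2018}.

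There is a second problem. The elements you find lie in $\{g: gx=y\}$, a translate of $\mathrm{Stab}(x)$. Moving them into $H_n$ or a conjugate costs $d(x,y)$, which can be of order $\operatorname{diam}F_n$. So the ``bounded correction'' is unjustified. The concatenation step also cannot use connecting elements outside $H_n$.

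What is missing is a way to transfer almost-invariance at one \emph{fixed} scale to all scales, with no loss depending on the almost-invariant vector. In a hyperbolic group this comes from quasi-multiplicativity of spheres. The paper proves the contrapositive of your Step~2 this way, in Lemma~\ref{lem:spherical-contraction}:
\begin{itemize}
\item Let $\mu_\ell$ be uniform on an annulus of radius $\ell$. The estimate $\mu_\ell^{*n}(g)\le[D(\ell/a+1)]^n e^{-\frac{\omega}{2}(n\ell+d_X(o,go))}$ from \cite[Appendix~B]{CoulonDalboSambusetti2018} is summed over $H$ in annuli.
\item Combined with $\|\lambda_{G/H}(\mu_\ell)\|=\lim_n\mu_\ell^{*2n}(H)^{1/(2n)}$, this gives $\|\lambda_{G/H}(\mu_\ell)\|\le D(\ell/a+1)e^{-c\ell}$, uniformly over all $H$ with $\omega(H,X)\le\omega-\eta$.
\item Fix one $\ell$ making the right-hand side $<1$. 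Since $\mu_\ell$ is finitely supported, this rules out $S$-almost-invariant vectors uniformly, which is the spectral gap you need.
\end{itemize}
So the difficulty is the reverse of what you anticipated. Step~1 is available in the literature, and Step~2 is where hyperbolicity must enter through a uniform spherical estimate.
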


The article is organized as follows. Theorem~\ref{thm:intro-product} is proved
in Section~\ref{sec:product}. Theorem~\ref{thm:intro-small-gaps} is proved in
Section~\ref{sec:small-gaps}. Theorem~\ref{thm:intro-hyperbolic}, together with a characterization in terms of Property~\emph{(FM)}, is proved in
Section~\ref{sec:hyperbolic}.

\section{A growth gap that depends on the generators}
\label{sec:product}

\subsection{A generating set with a gap for lattices with property (T)}
Let Let $G$ be a connected algebraic semisimple real Lie group with finite center and no compact factors. Let $A$ be a maximal real split torus of $G$ and let $\mathfrak{a}$ denote its Lie algebra. Fix a maximal compact subgroup $K < G$ and a positive closed Weyl chamber $\mathfrak{a}^{+} \subset \mathfrak{a}$ with $A^{+} := \exp \mathfrak{a}^{+}$ so that we have a \emph{Cartan decomposition} $G = KA^{+}K$. Write $\kappa : G \rightarrow \mathfrak{a}^{+}$ for the associated \emph{Cartan projection}, that is, for every $g \in G$, $\kappa(g)$ is the unique element such that $g = k \exp(\kappa(g)) k'$ for some $k,k' \in K$.
\par
Let $X = G/K$ be the Riemannian symmetric space of $G$. Fix a $G$-invariant metric $d_{X}$ induced from a Riemannian metric on $X$; more precisely, fixing a $K$-invariant norm $|| \cdot ||$ on $\mathfrak{a}^{+}$ induced from the Killing form, define $d_{X}(gK,hK) := ||\kappa(g^{-1}h)||$ for all $g,h \in G$. Fix a basepoint $o\in X$. In this subsection, for a discrete subset $H \subset G$, we write 
\begin{align*}
\delta_X(H)=\omega(H,X) = \limsup_{n \to \infty} \frac{1}{n} \log \big| \{ h \in H : d(o, ho) \leq n \} \big|.
\end{align*}
For more details on the material in the above discussion, see \cite[Section 2]{Skenderi2025} or \cite{Knapp1996}.
\begin{lemma}[Equal-length extraction]\label{lem:equal-length}
Let $\Omega=\langle F\rangle^+ \subset G$ be a discrete free subsemigroup of $G$ with a free finite generating set $F \subset G$. Suppose that for some $c,C>0$,
\[
  c|g|_F\le d_X(o,go)\le C|g|_F
  \qquad(g\in\Omega).
\]
If
\[
  0<\beta<\alpha<\delta_X(\Omega),
\]
then, for arbitrarily large integers $r$, there is a finite set
$A_r\subseteq\Omega$ such that:
\begin{enumerate}
\item every element of $A_r$ has the same $F$-word length;
\item $r\le d_X(o,ao)<r+1$ for every $a\in A_r$;
\item $A_r$ freely generates a semigroup;
\item $|A_r|\ge e^{\beta r}$.
\end{enumerate}
\end{lemma}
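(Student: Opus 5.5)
Because $\Omega$ is free on $F$, every element $g\in\Omega$ is a unique positive word in $F$ of length $|g|_F$. Any set of elements that all have the same $F$-length $\ell$ is therefore a code: a product of such elements is a word of length $k\ell$, and it splits uniquely into consecutive blocks of length $\ell$. So condition (3) will hold automatically once (1) holds, and the real work is to find, for arbitrarily large $r$, a single $F$-length $\ell$ together with the unit distance window $[r,r+1)$ that carries at least $e^{\beta r}$ elements.

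For the counting, set $N(r)=|\{g\in\Omega : r\le d_X(o,go)<r+1\}|$. Since $\alpha<\delta_X(\Omega)$, we have $\sum_{j\le n}N(j)\ge e^{\alpha n}$ for infinitely many $n$. For such $n$, some shell $j\le n$ satisfies $N(j)\ge e^{\alpha n}/(n+1)$. Discreteness and the lower bound $c|g|_F\le d_X$ make each shell finite, and $N(j)\le |F|^{j/c+1}$ for small $j$, so we can force $j\ge \alpha n/\log|F|$ up to constants. In particular $j\to\infty$ along our subsequence. I would also note that $j\le n$ already gives $N(j)\ge e^{\alpha j}/(n+1)$.

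Inside the shell $[j,j+1)$, the comparison $c|g|_F\le d_X(o,go)\le C|g|_F$ confines the $F$-lengths to the interval $[j/C,(j+1)/c]$. This interval contains at most $(j+1)/c+1$ integers. By pigeonhole, some length $\ell$ is shared by at least
\[
  \frac{e^{\alpha j}}{(n+1)\bigl((j+1)/c+1\bigr)}
\]
of the shell's elements. Put $r=j$ and let $A_r$ be these elements.

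The main obstacle is that the polynomial loss must be absorbed by the gap $\alpha-\beta$, and the factor $(n+1)$ involves $n$ rather than $r$. The fix uses the fact that $j$ is at least a fixed multiple of $n$: we have $n\le Kj$ for some constant $K$, so the whole denominator is polynomial in $r$. Hence
\[
  |A_r|\ge e^{\alpha r}/\mathrm{poly}(r)\ge e^{\beta r}
\]
for all large $r$. Because infinitely many $n$ yield values $j\ge \alpha n/\log|F|$ (up to constants), which tend to infinity, this produces arbitrarily large $r$. Conditions (1) and (2) hold by construction, and (3) follows from the first paragraph.
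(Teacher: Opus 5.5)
Your proof is correct and follows the paper's argument: find a fat unit annulus, pigeonhole over the $O(r)$ possible $F$-lengths so that the polynomial loss is absorbed by $\alpha-\beta$, and get freeness from the unique decomposition into blocks of a common length. The only difference is how you locate the annulus. The paper argues by contradiction that $N(r)\ge e^{\alpha r}$ for arbitrarily large $r$ (otherwise summing annuli would give $\ll e^{\alpha R}$ growth). That avoids your $1/(n+1)$ loss, and with it the extra step using $N(j)\le |F|^{(j+1)/c+1}$ to get $n\le Kj$.
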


\begin{proof}
Let
\[
  \mathcal A(r)=\{g\in\Omega:r\le d_X(o,go)<r+1\}.
\]
There are arbitrarily large $r$ such that
$|\mathcal A(r)|\ge e^{\alpha r}$. Otherwise, summing the unit annuli
would imply
\[
  |\{g\in\Omega:d_X(o,go)\le R\}|\ll e^{\alpha R},
\]
contrary to $\delta_X(\Omega)>\alpha$.

Every element of $\mathcal A(r)$ has $F$-word length at most $(r+1)/c$.
Thus only $O(r)$ word lengths occur in $\mathcal A(r)$, and one length layer
$A_r$ satisfies
\[
  |A_r|\ge \frac{e^{\alpha r}}{O(r)}\ge e^{\beta r}
\]
for all sufficiently large $r$.

It remains to prove freeness. The elements of $A_r$ are distinct words over
$F$ of one common length, say $k$. Any equality between two products of
elements of $A_r$ is therefore an equality between two $F$-words. Their
decompositions into consecutive blocks of length $k$ are unique, so the two
sequences of elements of $A_r$ coincide.
\end{proof}

\begin{proposition}\label{prop:base-gap}
Let $G$ be a connected semisimple real Lie group with finite center, no compact factors, and with Kazhdan's property (T). Let $\Lambda < G$ be a lattice. There is a finite symmetric generating set $T$ of $\Lambda$ such
that
\[
  \sup_{\substack{H<\Lambda\\ {}[\Lambda:H]=\infty}}\omega_T(H)
  <\omega_T(\Lambda).
\]
\end{proposition}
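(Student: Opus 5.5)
We will build $T$ so that the word metric $|\cdot|_T$ is, on a large free subsemigroup, almost proportional to the symmetric-space displacement. Then the Corlette--Leuzinger gap for $d_X$ will transfer to a gap for $|\cdot|_T$.

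Let $\epsilon=\epsilon(G)>0$ be the Corlette--Leuzinger constant. Every infinite-index $H<\Lambda$ then satisfies $\delta_X(H)\le\delta_X(\Lambda)-\epsilon$.

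Fix any finite symmetric generating set $T_0$ of $\Lambda$. By \v{S}varc--Milnor there are constants $c_0,C_0>0$ with
\[
c_0|g|_{T_0}-C_0\le d_X(o,go)\le C_0|g|_{T_0}\qquad (g\in\Lambda).
\]
Next choose a discrete free subsemigroup $\Omega=\langle F\rangle^+\subset\Lambda$ with $\delta_X(\Omega)$ as close to $\delta_X(\Lambda)$ as we like, on which word length and displacement are comparable. We expect to obtain such $\Omega$ by a ping-pong / Schottky construction of loxodromic elements that nearly realizes the critical exponent, in the style of Quint. This is the step we expect to be the main obstacle; one may also cite a result from \cite{Skenderi2025}.

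Apply Lemma~\ref{lem:equal-length} with $\beta<\alpha<\delta_X(\Omega)$ and $\beta>\delta_X(\Lambda)-\epsilon/2$. For large $r$ this gives a set $A_r$ with $r\le d_X(o,ao)<r+1$, freely generating a semigroup, and with $|A_r|\ge e^{\beta r}$. Set
\[
T=T_0\cup A_r\cup A_r^{-1}.
\]

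We now compare growth rates, working at scale $r$.

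\emph{Lower bound.} Words of length $n$ in $A_r$ are distinct elements of $T$-length at most $n$, so
\[
\omega_T(\Lambda)\ge \log|A_r|\ge \beta r.
\]

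\emph{Upper bound for subgroups.} Every generator $t\in T$ has $d_X(o,to)\le r+1$ once $r$ exceeds the $T_0$-displacements. Hence $d_X(o,go)\le (r+1)|g|_T$, and
\[
H\cap B_T(n)\subseteq\{h\in H: d_X(o,ho)\le (r+1)n\}.
\]
This only yields $\omega_T(H)\le (r+1)\,\delta_X(H)$ up to subexponential factors, which is too crude: it carries an extra $\log$ from the count of words. We therefore refine by counting directly. An element of $B_T(n)$ is determined by a word of length at most $n$. Such a word uses $m$ letters from $A_r^{\pm}$ and $n-m$ letters from $T_0$. Its displacement is at most $(r+1)m+C_0(n-m)$. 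The words are counted by interleavings of the two letter types.

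For a subgroup $H$ it is better to bound $|H\cap B_T(n)|$ by $d_X$-balls directly:
\[
|H\cap B_T(n)|\le \#\{h\in H: d_X(o,ho)\le (r+1)n\}\approx e^{(r+1)n(\delta_X(\Lambda)-\epsilon)}.
\]
Thus $\omega_T(H)\le (r+1)(\delta_X(\Lambda)-\epsilon)$. Comparing with the lower bound $\beta r$, where $\beta>\delta_X(\Lambda)-\epsilon/2$, gives
\[
\beta r>(r+1)(\delta_X(\Lambda)-\epsilon)+\eta r
\]
for some $\eta>0$ and all large $r$. Hence
\[
\sup_{H}\omega_T(H)\le\omega_T(\Lambda)-\eta r<\omega_T(\Lambda),
\]
uniformly over all infinite-index $H$. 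This uniformity holds because the Corlette--Leuzinger bound is uniform in $H$.

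The key points to verify are, first, the near-critical Schottky semigroup $\Omega$ with comparable word length, and second, that the Corlette--Leuzinger bound is uniform over all infinite-index $H$.
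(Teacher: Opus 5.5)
Your proposal is correct and is essentially the paper's proof. It uses the same near-critical free semigroup from Skenderi's theorem (Zariski density via Borel, comparability via the Anosov property) fed into Lemma~\ref{lem:equal-length}, the same generating set $T_0\cup A_r\cup A_r^{-1}$, and the same chain $\omega_T(H)\le (r+1)\delta_X(H)\le (r+1)(\delta_X(\Lambda)-\epsilon)<\beta r\le\omega_T(\Lambda)$; the paper also passes to a torsion-free finite-index subgroup to apply Leuzinger when $\Lambda$ has torsion. Delete two things: the ``refine by counting directly'' paragraph, since $\omega_T(H)\le(r+1)\delta_X(H)$ is exact with no logarithmic loss, and the \v{S}varc--Milnor lower bound, which is never used and fails for non-uniform lattices in the rank-one property~(T) groups (only finiteness of $T_0$, giving $d_X(o,to)\le r+1$, is needed).
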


\begin{proof}
Put
\[
  v=h_{\mathrm{vol}}(X)=\delta_X(\Lambda).
\]
By the critical exponent gap theorems of Corlette \cite[Theorem 4.4]{Corlette1990} and Leuzinger
\cite[Main Theorem]{Leuzinger2003}, there is $\varepsilon_0 = \varepsilon_0(G)>0$ such that
\begin{equation}\label{eq:leuzinger-gap}
  \delta_X(H)\le v-\varepsilon_0
\end{equation}
for every infinite-index subgroup $H<\Lambda$. We remark that Leuzinger states the theorem for torsion-free discrete groups. To obtain
\eqref{eq:leuzinger-gap}, fix a torsion-free finite-index subgroup
$\Lambda_0<\Lambda$. If $H<\Lambda$ has infinite-index, then
$H_0=H\cap\Lambda_0$ has finite index in $H$ and infinite-index in
$\Lambda_0$. Hence $H_0$ has infinite covolume in $G$. Since critical
exponent is invariant under passage to a finite-index subgroup,
$\delta_X(H)=\delta_X(H_0)\le v-\varepsilon_0$.

Set $q=v-\varepsilon_0$ and choose
\[
  q<\beta<\alpha<\theta<v.
\]
By Borel's Density Theorem \cite{Borel1960}, the lattice $\Lambda$ is Zariski dense in $G$. By
\cite[Theorem~1.3]{Skenderi2025}, there is a finitely generated free
semigroup $\Omega=\langle F\rangle^+\subseteq\Lambda$ such that
$\delta_X(\Omega)\ge\theta$. Moreover, this semigroup is $P$-Anosov in the sense of Kassel--Potrie \cite{KasselPotrie2022}; in fact, more is true:
there is $b>0$ so that, for all $g \in \Omega$, we have
\begin{equation}\label{eq:anosov-linear}
  \min_{\phi\in\Delta}\phi(\kappa(g))\ge b|g|_F,
\end{equation}
where $\Delta$ is the set of simple restricted roots and $\kappa$ is the
Cartan projection. Since every $\phi\in\Delta$ is bounded above by a constant
multiple of $\|\kappa(g)\|=d_X(o,go)$, equation
\eqref{eq:anosov-linear} gives a lower linear bound for $d_X(o,go)$ in terms
of $|g|_F$. The reverse bound follows from the triangle inequality and the
finiteness of $F$. Lemma~\ref{lem:equal-length} therefore applies.

Fix a finite symmetric generating set $U$ of $\Lambda$ and put
\[
  D_U=\max_{u\in U}d_X(o,uo).
\]
Choose $r$ and $A_r$ as in Lemma~\ref{lem:equal-length}, with $r$ large
enough that
\begin{equation}\label{eq:r-choice}
  r+1\ge D_U
  \qquad\text{and}\qquad
  \beta r>q(r+1).
\end{equation}
Define
\[
  T=U\cup A_r\cup A_r^{-1}.
\]
Since $A_r$ freely generates a semigroup, all $|A_r|^n$ positive words of
length $n$ give distinct elements of $B_T(n)$. Hence
\begin{equation}\label{eq:ambient-lower}
  \omega_T(\Lambda)\ge\log|A_r|\ge\beta r.
\end{equation}

Every element of $T$ moves $o$ by at most $r+1$. Therefore
\[
  H\cap B_T(n)
  \subseteq
  \{h\in H:d_X(o,ho)\le(r+1)n\}.
\]
For every infinite-index $H<\Lambda$, equations
\eqref{eq:leuzinger-gap}--\eqref{eq:ambient-lower} give
\[
  \omega_T(H)
  \le(r+1)\delta_X(H)
  \le q(r+1)
  <\beta r
  \le\omega_T(\Lambda).
\]
The upper bound is uniform in $H$.
\end{proof}

\subsection{The two product generating sets}

\begin{proof}[Proof of Theorem~\ref{thm:intro-product}]
Let $T$ be supplied by Proposition~\ref{prop:base-gap}. Write
\[
  h=\omega_T(\Lambda)
\]
and choose $\eta>0$ such that
\begin{equation}\label{eq:base-word-gap}
  \omega_T(J)\le h-\eta
\end{equation}
for every infinite-index subgroup $J<\Lambda$. Since the trivial subgroup
has exponent zero, we may take $\eta\le h$.

First define
\[
  S_1=(T\times\{1\})\cup(\{1\}\times T).
\]
Then
\begin{equation}\label{eq:l1-length}
  |(g_1,g_2)|_{S_1}=|g_1|_T+|g_2|_T.
\end{equation}
We claim that $\omega_{S_1}(\Lambda\times\Lambda)=h$. If
$b_n=|B_T(n)|$, then
\[
  |B_{S_1}(n)|\le\sum_{i+j\le n}b_i b_j.
\]
For every $\epsilon>0$, there is $C_\epsilon>0$ such that
$b_k\le C_\epsilon e^{(h+\epsilon)k}$ for every $k\ge0$. Consequently
\[
  |B_{S_1}(n)|
  \le C_\epsilon^2(n+1)^2e^{(h+\epsilon)n}.
\]
This proves the upper bound, and the reverse bound follows from
$B_T(n)\times\{1\}\subseteq B_{S_1}(n)$. The infinite-index subgroup
$\Lambda\times\{1\}$ therefore satisfies
\[
  \omega_{S_1}(\Lambda\times\{1\})
  =h
  =\omega_{S_1}(\Lambda\times\Lambda).
\]
Thus there is no growth gap for $S_1$.

For the second generating set put $T^*=T\cup\{1\}$ and
\[
  S_\infty=(T^*\times T^*)\setminus\{(1,1)\}.
\]
Then
\begin{equation}\label{eq:linfty-length}
  |(g_1,g_2)|_{S_\infty}
  =\max\{|g_1|_T,|g_2|_T\},
\end{equation}
so
\begin{equation}\label{eq:linfty-ball}
  B_{S_\infty}(n)=B_T(n)\times B_T(n),
  \qquad
  \omega_{S_\infty}(\Lambda\times\Lambda)=2h.
\end{equation}

Let $H<\Lambda\times\Lambda$ have infinite-index, let $p_i$ be the two
coordinate projections, and set $P_i=p_i(H)$. If, say, $P_1$ has infinite
index in $\Lambda$, then
\[
  |H\cap B_{S_\infty}(n)|
  \le |P_1\cap B_T(n)|\,|B_T(n)|.
\]
It follows from \eqref{eq:base-word-gap} that
\begin{equation}\label{eq:projection-case}
  \omega_{S_\infty}(H)
  \le\omega_T(P_1)+h
  \le2h-\eta.
\end{equation}
The same argument applies if $P_2$ has infinite-index.

It remains to assume that both $P_1$ and $P_2$ have finite index in
$\Lambda$. Define
\[
  N_1=\{x\in P_1:(x,1)\in H\},
  \qquad
  N_2=\{y\in P_2:(1,y)\in H\}.
\]
Then $N_i\lhd P_i$. Moreover, there is an isomorphism
\begin{equation}\label{eq:goursat}
  P_1/N_1\longrightarrow P_2/N_2,
  \qquad
  xN_1\mapsto yN_2
  \quad\text{when }(x,y)\in H.
\end{equation}
Indeed, changing $y$ changes it by an element of $N_2$, and the definitions
of $P_i$ and $N_i$ give surjectivity and injectivity. This is the usual
Goursat description, included here to fix the relevant kernels.

Each $P_i$ is a finite-index subgroup of the higher-rank irreducible lattice $\Lambda$. By the Margulis normal
subgroup theorem \cite{Margulis1991}, each $N_i$ is either finite or has finite index
in $P_i$. The isomorphism \eqref{eq:goursat} shows that $N_1$ has finite
index exactly when $N_2$ does. If both had finite index, then
$N_1\times N_2\subseteq H$ would have finite index in
$P_1\times P_2$, forcing $H$ to have finite index in
$\Lambda\times\Lambda$. Therefore both $N_1$ and $N_2$ are finite.

The kernel of $p_1|_H$ is $\{1\}\times N_2$. Hence
\[
  |H\cap B_{S_\infty}(n)|
  \le |N_2|\,|P_1\cap B_T(n)|
  \le |N_2|\,|B_T(n)|,
\]
and consequently
\begin{equation}\label{eq:subdirect-case}
  \omega_{S_\infty}(H)\le h.
\end{equation}
Equations \eqref{eq:projection-case} and \eqref{eq:subdirect-case}, together
with $\eta\le h$, show uniformly that
\[
  \omega_{S_\infty}(H)
  \le2h-\eta
  <2h
  =\omega_{S_\infty}(\Lambda\times\Lambda).
\]
Thus $S_\infty$ has a growth gap.
\end{proof}

\section{Arbitrarily small growth gaps}
\label{sec:small-gaps}

The construction in this section is elementary and applies to any finitely
generated group containing an infinite-index nonabelian free subgroup.

\begin{proof}[Proof of Theorem~\ref{thm:intro-small-gaps}]
Fix a finite symmetric generating set $U$ of $\Gamma$, and write
\[
  q=|U|.
\]
Let
\[
  F=\langle a,b\rangle<\Gamma
\]
be a free subgroup of rank two and of infinite-index. For every $m\geq 2$,
put
\[
  a_i=b^iab^{-i}\qquad(0\leq i\leq m-1),
\]
and define
\[
  A_m=\{a_0,\ldots,a_{m-1}\},
  \qquad
  H_m=\langle A_m\rangle.
\]
The elements of $A_m$ freely generate $H_m$. Indeed, consider the
homomorphism
\[
  F\longrightarrow\Z,\qquad a\mapsto 0,\quad b\mapsto 1.
\]
The Schreier rewriting process gives
\[
  \{b^iab^{-i}:i\in\Z\}
\]
as a free basis of its kernel. In particular, $H_m$ is free of rank $m$.
Moreover, since $H_m<F$ and $F$ has infinite-index in $\Gamma$, the subgroup
$H_m$ also has infinite-index in $\Gamma$.

Now let
\[
  S_m=U\cup A_m\cup A_m^{-1}.
\]
This is a finite symmetric generating set of $\Gamma$. Every freely reduced
word of length $n$ in the alphabet $A_m\cup A_m^{-1}$ represents a distinct
element of $H_m\cap B_{S_m}(n)$. Hence
\[
  |H_m\cap B_{S_m}(n)|
  \geq 2m(2m-1)^{n-1},
\]
and therefore
\begin{equation}\label{eq:small-gap-subgroup-lower}
  \omega_{S_m}(H_m)\geq\log(2m-1).
\end{equation}

On the other hand, every element of $B_{S_m}(n)$ admits a geodesic
representative in which no letter is immediately followed by its inverse.
Consequently,
\[
  |B_{S_m}(n)|
  \leq
  1+\sum_{k=1}^{n}|S_m|(|S_m|-1)^{k-1}.
\]
Since $|S_m|\leq q+2m$, it follows that
\begin{equation}\label{eq:small-gap-ambient-upper}
  \omega_{S_m}(\Gamma)
  \leq\log(|S_m|-1)
  \leq\log(2m+q-1).
\end{equation}
Since $H_m$ has infinite-index, equations
\eqref{eq:small-gap-subgroup-lower} and
\eqref{eq:small-gap-ambient-upper} give
\[
  0
  \leq\operatorname{Gap}_{S_m}(\Gamma)
  \leq
  \omega_{S_m}(\Gamma)-\omega_{S_m}(H_m)
  \leq
  \log\left(\frac{2m+q-1}{2m-1}\right).
\]
The final expression tends to zero as $m\to\infty$, proving the general
statement.

Finally, let $\Lambda<G$ be as in
Theorem~\ref{thm:intro-product}. By Borel's Density Theorem
\cite{Borel1960}, the group $\Lambda$ is not virtually solvable. The Tits
alternative \cite{Tits1972} therefore implies that $\Lambda$ contains a
nonabelian free subgroup $F$. Since $G$ has Kazhdan's property~\emph{(T)},
so does its lattice $\Lambda$. The subgroup $F$ must have infinite-index:
otherwise it would inherit Property~\emph{(T)}, whereas a nonabelian free
group does not have Property~\emph{(T)}. Thus the general statement applies
to $\Lambda$, as desired.
\end{proof}

\begin{remark}
Theorem~\ref{thm:intro-small-gaps} does not assert that one of the generating
sets $S_m$ has zero gap. It says that no positive constant can bound
$\operatorname{Gap}_S(\Gamma)$ uniformly as the finite symmetric generating
set $S$ varies.
\end{remark}

\section{Generator independence for hyperbolic groups}
\label{sec:hyperbolic}

An action of a countable group $G$ on a countable set $Y$ is
\emph{amenable} if there is a $G$-invariant mean on $\ell^\infty(Y)$.
Equivalently, the permutation representation
\[
  \lambda_Y:G\longrightarrow\cU(\ell^2(Y))
\]
has almost invariant unit vectors. Following Glasner--Monod
\cite{GlasnerMonod2007}, $G$ has \emph{Property~(FM)} if every amenable
action of $G$ on a countable set has a finite orbit.

\begin{theorem}\label{thm:hyperbolic-characterization}
Let $G$ be a non-elementary hyperbolic group. The following are equivalent.
\begin{enumerate}
\item There is a finite symmetric generating set $S$ and $\varepsilon>0$
such that
\[
  \omega_S(H)\le\omega_S(G)-\varepsilon
\]
for every infinite-index subgroup $H<G$.
\item The same assertion holds for every finite symmetric generating set of
$G$.
\item The group $G$ has Property~\emph{(FM)}.
\end{enumerate}
More generally, suppose that $G$ acts properly and cocompactly by isometries
on a proper geodesic hyperbolic space $X$. Then
\[
  \sup_{[G:H]=\infty}\omega(H,X)<\omega(G,X)
\]
if and only if $G$ has Property~\emph{(FM)}.
\end{theorem}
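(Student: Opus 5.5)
The plan is to prove the "more generally" statement for an arbitrary proper cocompact action on a proper geodesic hyperbolic space $X$. The equivalences (1)$\Leftrightarrow$(2)$\Leftrightarrow$(3) then follow by taking $X$ to be the Cayley graph of $(G,S)$ for each finite symmetric generating set $S$. Indeed, $\omega(H,X)=\omega_S(H)$ there, and (FM) does not refer to $S$.

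The key tool is the dichotomy, known for groups acting properly and cocompactly on hyperbolic spaces, relating growth of a subgroup to amenability of the quotient action. This is the Coulon--Dal'Bo--Sambusetti / Roblin–Grigorchuk-cogrowth type theorem: for $H<G$,
\[
\omega(H,X)=\omega(G,X) \iff G\curvearrowright G/H \text{ is amenable},
\]
together with a quantitative version. The quantitative form says the difference $\omega(G,X)-\omega(H,X)$ is bounded below by a positive function of the spectral gap of $\lambda_{G/H}$ (for instance, of $1-\|\lambda_{G/H}(\mu)\|$ for a fixed symmetric finitely supported generating measure $\mu$). Conversely, if $\lambda_{G/H}$ has almost invariant vectors to within $\epsilon$, then $\omega(H,X)\ge \omega(G,X)-f(\epsilon)$ with $f(\epsilon)\to 0$. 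I would cite this from Coulon--Dougall--Schapira--Tapie ("Twisted Patterson--Sullivan measures and applications to amenability and coverings") rather than reprove it. Those results are stated for strongly positively recurrent/cocompact actions on hyperbolic spaces, which covers our setting.

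\emph{(FM) $\Rightarrow$ gap.} Suppose instead there are infinite-index $H_k$ with $\omega(H_k,X)\to\omega(G,X)$. By the quantitative direction, the spectral gaps of $\lambda_{G/H_k}$ tend to $0$. Form $Y=\bigsqcup_k G/H_k$. Then $\lambda_Y=\bigoplus_k\lambda_{G/H_k}$ has almost invariant vectors, so $G\curvearrowright Y$ is amenable. Every orbit of $Y$ is some $G/H_k$, which is infinite, and this contradicts (FM). Note that the sup over all infinite-index subgroups being equal to $\omega(G,X)$ is exactly what produces such a sequence.

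\emph{Gap $\Rightarrow$ (FM).} Suppose $G\curvearrowright Y$ is amenable with all orbits infinite. Write $Y=\bigsqcup_i G/H_i$ with each $[G:H_i]=\infty$. Almost invariant vectors for a direct sum give, for each $\epsilon$ and a fixed finite generating set, an orbit on which the vector is $\epsilon$-almost invariant. This follows by the standard averaging argument: the defect $\sum_s\|s v-v\|^2$ is additive over orbits, so some orbit component has relative defect $\le\epsilon$. Hence the spectral gaps of $\lambda_{G/H_i}$ accumulate at $0$. The converse quantitative direction then gives $\omega(H_i,X)\to\omega(G,X)$, so there is no gap.

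The main obstacle is the quantitative, uniform version of the amenability–growth correspondence. The classical statement concerns a single subgroup with equality of exponents, whereas we need the gap to be controlled by spectral data uniformly in $H$. If no citable uniform statement exists, my fallback is to run the Grigorchuk cogrowth argument. This compares the spectral radius of $\mu$ on $G/H$ with the growth of $H$, using a random walk on $G$ whose Green function at the critical value has exponent $\omega(G,X)$. Hyperbolicity supplies the needed uniformity through Ancona-type inequalities / the Patterson--Sullivan density with uniform shadow lemma constants, which depend only on $G$ and $X$ and not on $H$.
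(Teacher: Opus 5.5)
\textbf{There is a genuine gap in the direction gap $\Rightarrow$ (FM).} Your skeleton matches the paper's. You reduce to a general $X$ and take $X=\operatorname{Cay}(G,S)$ at the end. You split an amenable action into quasi-regular pieces $G/H_i$ with $[G:H_i]=\infty$. For (FM) $\Rightarrow$ gap you rely on Coulon--Dougall--Schapira--Tapie; the paper cites their Theorem~6.15 directly, which already packages your direct-sum argument.

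The problem is that the gap $\Rightarrow$ (FM) direction rests entirely on your statement (b): that $\epsilon$-almost invariant vectors in $\ell^2(G/H)$ force $\omega(H,X)\ge\omega(G,X)-f(\epsilon)$ with $f$ independent of $H$. You neither prove this nor point to a source that contains it. The CDST result used in the paper gives only the (FM) $\Rightarrow$ gap direction. The co-amenability theorem of Coulon--Dal'Bo--Sambusetti is pointwise: it rules out $\omega(H,X)=\omega(G,X)$ for non-co-amenable $H$, but says nothing uniform about infinite-index subgroups whose exponents only approach $\omega(G,X)$. The paper isolates exactly this issue in Remark~\ref{rem:coamenability-not-enough}. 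Your fallback (cogrowth, Green functions, Ancona inequalities) is only a sketch and never says where the $H$-independence would come from. Since this uniform estimate is the actual content of that direction, the proposal is incomplete there.

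\textbf{How the paper supplies it (Lemma~\ref{lem:spherical-contraction}).} The idea is to control the spectral radius on $G/H$ through return probabilities, so that every $H$-dependent constant disappears in a limit.
\begin{enumerate}
\item Let $\mu_\ell$ be uniform on the annulus $\{g:\ell-a<d_X(o,go)\le\ell\}$.
\item Hyperbolicity (the argument of CDS, Proposition~B.3) gives the $H$-free bound $\mu_\ell^{*n}(g)\le[D(\ell/a+1)]^n e^{-\frac{\omega}{2}(n\ell+d_X(o,go))}$.
\item Summing over $H$ with $|H\cap B_X(o,r)|\le Ae^{\alpha r}$ bounds $\mu_\ell^{*n}(H)$. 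The only $H$-dependence is the prefactor $A$, which is killed by $\|\lambda_{G/H}(\mu_\ell)\|=\lim_n\mu_\ell^{*2n}(H)^{1/2n}$.
\item This yields $\|\lambda_{G/H}(\mu_\ell)\|\le D(\ell/a+1)e^{-c\ell}$ with $c=\tfrac12\min\{\omega,\eta\}$, for every $H$ with $\omega(H,X)\le\omega-\eta$.
\item For $\ell$ large, $\|\lambda_Y(\mu_\ell)\|=\sup_i\|\lambda_{G/H_i}(\mu_\ell)\|<1$, which contradicts amenability.
\end{enumerate}
Because the measure is allowed to depend on $\eta$, you need neither a fixed $\mu$, nor the function $f$, nor your orbit-selection step.
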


The implication from a growth gap to Property~(FM) requires a uniform form
of the spherical estimate from \cite[Appendix~B]{CoulonDalboSambusetti2018}.

In what follows, if $\mu$ is a finitely supported probability measure on $G$, we use the same
notation for the associated averaging operator
\[
  \lambda_Y(\mu)
  =
  \sum_{g\in G}\mu(g)\lambda_Y(g)
  \in\mathcal B(\ell^2(Y)).
\]
Thus $\|\lambda_Y(\mu)\|$ denotes the operator norm of this averaging operator
on $\ell^2(Y)$. In particular, when $Y=G/H$, we write
$\lambda_{G/H}(\mu)$ for the corresponding operator in the quasi-regular
representation.

\begin{lemma}\label{lem:spherical-contraction}
Let $G$ act properly and cocompactly by isometries on a proper geodesic
hyperbolic space $X$, and assume that $G$ is non-elementary. Put
$\omega=\omega(G,X)$. There are constants $a,D>0$ and, for every
$\ell\ge a$, a symmetric finitely supported probability measure $\mu_\ell$
on $G$ such that the following holds. For every $\eta>0$ and every subgroup
$H<G$ satisfying
\[
  \omega(H,X)\le\omega-\eta,
\]
one has
\begin{equation}\label{eq:spherical-contraction}
  \|\lambda_{G/H}(\mu_\ell)\|
  \le
  D\left(\frac{\ell}{a}+1\right)e^{-c\ell},
  \qquad
  c=\frac12\min\{\omega,\eta\}.
\end{equation}
Here $\lambda_{G/H}$ denotes the quasi-regular representation on
$\ell^2(G/H)$.
\end{lemma}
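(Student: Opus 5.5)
The plan is to take $\mu_\ell$ to be the normalized counting measure on a thick annulus
\[
  A_\ell=\{g\in G:\ \ell-a\le d(o,go)<\ell+a\},
\]
with $a$ chosen larger than the hyperbolicity constant plus the codiameter of the action. This measure is symmetric because $d(o,go)=d(o,g^{-1}o)$. By Coornaert's purely exponential growth for cocompact actions on hyperbolic spaces (non-elementarity gives $\omega>0$), there is $D_0$ with $D_0^{-1}e^{\omega \ell}\le |A_\ell|\le D_0 e^{\omega\ell}$ for $\ell\ge a$.

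Since $\lambda_{G/H}(\mu_\ell)$ is self-adjoint with nonnegative matrix entries, and $G/H$ is a transitive $G$-set, I will estimate its norm through diagonal coefficients of powers:
\[
  \|\lambda_{G/H}(\mu_\ell)\|=\lim_{k\to\infty}\langle \lambda_{G/H}(\mu_\ell^{*2k})\delta_{H},\delta_{H}\rangle^{1/2k}=\lim_{k\to\infty}\mu_\ell^{*2k}(H)^{1/2k}.
\]
The first equality is the standard fact that, for a positive self-adjoint Markov-type operator on a transitive space, the spectral radius is detected by return probabilities from any single point. Under this reduction the growth hypothesis on $H$ enters only through $\mu_\ell^{*2k}(H)=\sum_{h\in H}\mu_\ell^{*2k}(h)$.

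The core step is a uniform pointwise upper bound for convolution powers:
\[
  \mu_\ell^{*n}(g)\le \bigl(D_1(\ell/a+1)\bigr)^{n} e^{-\omega d(o,go)}\,e^{-\omega(n\ell-d(o,go))/2}\qquad(g\in G).
\]
I will prove it by induction on $n$, counting the $n$-tuples $(g_1,\dots,g_n)\in A_\ell^n$ with product $g$. Fix $g$ and the partial products. The hyperbolic (thin-triangle) estimate from \cite[Appendix~B]{CoulonDalboSambusetti2018} says the following: given $g$ and the length of $g_n$, the number of $g_n\in A_\ell$ with $d(o,g g_n^{-1}o)=t$ is at most $D e^{\omega(\ell+t-d(o,go))/2}$, since $g_n$ is determined up to a bounded ball by the Gromov product $(o\mid go)_{gg_n^{-1}o}$. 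The factor $\ell/a+1$ comes from summing over the roughly $\ell/a$ possible values of this Gromov product in steps of size $a$. Dividing by $|A_\ell|\asymp e^{\omega\ell}$ then gives the inductive step with half of the ``backtracking defect'' $n\ell-d(o,go)$ saved.

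Summing this bound over $h\in H$, grouped into annuli of $d(o,ho)\in[r,r+1)$, and using $|\{h: d(o,ho)\le r\}|\le C_\epsilon e^{(\omega-\eta+\epsilon)r}$, gives
\[
  \mu_\ell^{*2k}(H)\le \bigl(D_1(\ell/a+1)\bigr)^{2k}\sum_{r\le 2k\ell}C_\epsilon e^{(\omega-\eta+\epsilon)r}e^{-\omega r}e^{-\omega(2k\ell-r)/2}.
\]
Each summand is at most $C_\epsilon e^{-2k\ell\min\{\eta-\epsilon,\omega\}/2}$ up to the polynomial number of terms. Taking $2k$-th roots and letting $k\to\infty$, then $\epsilon\to0$, yields \eqref{eq:spherical-contraction} with $c=\tfrac12\min\{\omega,\eta\}$ and $D=D_1$. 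The constants $a,D$ depend only on $(G,X)$, which is exactly the uniformity required over $H$ and $\eta$.

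The main obstacle is the uniform convolution estimate. In particular, I have to check that the constant per step really is $D(\ell/a+1)$, with no exponential loss in $\ell$, and that the saving $e^{-\omega(n\ell-d)/2}$ survives the induction. If the inductive bookkeeping with Gromov products becomes unwieldy, my fallback is to factor each annulus element as a product of two half-annulus elements up to a bounded error set. I would then apply Cauchy--Schwarz in the coset space and compare $\|\lambda_{G/H}(\mu_\ell)\|^2$ with $\mu_{\ell}(H)$-type quantities, which are at most $D_0e^{-\eta\ell}$ by the growth hypothesis.
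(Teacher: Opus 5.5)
Your proposal is correct and follows essentially the same route as the paper: the uniform measure on a thick annulus, the pointwise convolution bound $\mu_\ell^{*n}(g)\le[D(\ell/a+1)]^n e^{-\omega(n\ell+d(o,go))/2}$ taken from the proof of \cite[Proposition~B.3]{CoulonDalboSambusetti2018} (your $e^{-\omega d}e^{-\omega(n\ell-d)/2}$ is the same expression), summation of this bound over annuli in $H$, and the return-probability formula for $\|\lambda_{G/H}(\mu_\ell)\|$. The differences are only cosmetic: you use a two-sided annulus; you sketch the convolution bound by induction, where the $t$-dependence cancels exactly so each step costs only a factor $O(\ell/a+1)$, whereas the paper simply cites it; and you treat the regimes $\omega(H,X)<\omega/2$ and $\omega(H,X)\ge\omega/2$ in one line, whereas the paper separates them into two cases.
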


\begin{proof}
Fix $o\in X$. After increasing the thickness $a$, let $\mu_\ell$ be the
uniform probability measure on
\[
  \mathcal S_\ell
  =\{g\in G:\ell-a<d_X(o,go)\le\ell\}.
\]
The proof of \cite[Proposition~B.3]{CoulonDalboSambusetti2018} gives a
constant $D>0$, independent of $H,\ell,n$, and $g$, such that
\begin{equation}\label{eq:pointwise-convolution}
  \mu_\ell^{*n}(g)
  \le
  \left[D\left(\frac{\ell}{a}+1\right)\right]^n
  \exp\left(-\frac{\omega}{2}
  \bigl(n\ell+d_X(o,go)\bigr)\right).
\end{equation}
Write $\omega_H=\omega(H,X)$. For every $\alpha>\omega_H$, there is
$A=A(H,\alpha)$ such that
$|H\cap B_X(o,r)|\le Ae^{\alpha r}$ for all $r\ge0$. Partitioning $H$ into
annuli of width $a$ and summing \eqref{eq:pointwise-convolution} gives
\begin{equation}\label{eq:subgroup-return}
  \mu_\ell^{*n}(H)
  \le
  A'
  \left[D\left(\frac{\ell}{a}+1\right)\right]^n
  e^{-\omega n\ell/2}
  \sum_{0\le ka\le n\ell}e^{(\alpha-\omega/2)ka},
\end{equation}
where $A'$ may depend on $H$ and $\alpha$, but not on $n$. Since
$\mu_\ell$ is symmetric, the return-probability formula gives
\begin{equation}\label{eq:return-radius}
  \|\lambda_{G/H}(\mu_\ell)\|
  =
  \lim_{n\to\infty}
  \bigl(\mu_\ell^{*2n}(H)\bigr)^{1/(2n)}.
\end{equation}

If $\omega_H<\omega/2$, choose
$\omega_H<\alpha<\omega/2$. The sum in
\eqref{eq:subgroup-return} is bounded independently of $n$, and hence
\[
  \|\lambda_{G/H}(\mu_\ell)\|
  \le
  D\left(\frac{\ell}{a}+1\right)e^{-\omega\ell/2}.
\]
If $\omega_H\ge\omega/2$, take $\alpha=\omega_H+\eta/2$. The geometric sum
in \eqref{eq:subgroup-return} has exponential rate
$\alpha-\omega/2$, and therefore
\[
  \|\lambda_{G/H}(\mu_\ell)\|
  \le
  D\left(\frac{\ell}{a}+1\right)
  e^{(\omega_H+\eta/2-\omega)\ell}
  \le
  D\left(\frac{\ell}{a}+1\right)e^{-\eta\ell/2}.
\]
Combining the two cases proves \eqref{eq:spherical-contraction}.
\end{proof}

\begin{proof}[Proof of Theorem~\ref{thm:hyperbolic-characterization}]
We first prove the more general assertion concerning $X$.

Suppose that for some $\eta>0$,
\begin{equation}\label{eq:gap-X}
  \omega(H,X)\le\omega(G,X)-\eta
\end{equation}
for every infinite-index subgroup $H<G$. Let $G\curvearrowright Y$ be an
amenable action on a countable set. Assume for a contradiction that it has
no finite orbit. Choosing one point in each orbit and writing $H_i$ for its
stabilizer, we have
\[
  Y=\bigsqcup_{i\in I}G/H_i,
  \qquad [G:H_i]=\infty,
\]
and
\[
  \lambda_Y=\bigoplus_{i\in I}\lambda_{G/H_i}.
\]
Lemma~\ref{lem:spherical-contraction} and \eqref{eq:gap-X} imply that, for
all sufficiently large $\ell$,
\begin{equation}\label{eq:Y-contraction}
  \|\lambda_Y(\mu_\ell)\|
  =
  \sup_{i\in I}\|\lambda_{G/H_i}(\mu_\ell)\|
  <1.
\end{equation}
On the other hand, amenability gives almost invariant unit vectors in
$\ell^2(Y)$. Since $\mu_\ell$ has finite support, this forces
$\|\lambda_Y(\mu_\ell)\|=1$, contradicting
\eqref{eq:Y-contraction}. Thus $G$ has Property~(FM).

Conversely, suppose that $G$ has Property~(FM). A proper cocompact action on
a proper geodesic hyperbolic space is strongly positively recurrent.
Therefore \cite[Theorem~6.15]{CoulonDougallSchapiraTapie2025} gives
$\theta>0$ such that, for every subgroup $H<G$,
\[
  \omega(H,X)>(1-\theta)\omega(G,X)
  \quad\Longrightarrow\quad
  [G:H]<\infty.
\]
Since $G$ is non-elementary, $\omega(G,X)>0$. Every infinite-index subgroup
therefore satisfies
\[
  \omega(H,X)\le(1-\theta)\omega(G,X),
\]
which is a uniform growth gap.

Finally take $X=\operatorname{Cay}(G,S)$. For every finite symmetric
generating set $S$, this is a proper geodesic hyperbolic space on which $G$
acts properly and cocompactly, and $\omega(H,X)=\omega_S(H)$. The
equivalence with Property~(FM), which does not refer to $S$, proves the
equivalence of (1), (2), and (3).
\end{proof}

\begin{remark}\label{rem:coamenability-not-enough}
The pointwise theorem of \cite{CoulonDalboSambusetti2018}, stating that
$\omega(H,X)=\omega(G,X)$ if and only if $H$ is co-amenable in $G$, does not
by itself prove Theorem~\ref{thm:hyperbolic-characterization}. It excludes
attainment of the endpoint, but not a sequence of infinite-index subgroups
whose exponents approach it. The uniform estimate in
Lemma~\ref{lem:spherical-contraction} is what promotes a growth gap for one
metric to Property~(FM).
\end{remark}

\begin{remark}\label{rem:scope}
The supremum throughout this paper is over all infinite-index subgroups. If
one restricts the spectrum to finitely generated subgroups, the implication
from a growth gap to Property~(FM) does not follow from this proof, since
stabilizers of an arbitrary amenable action need not be finitely generated.
Also, if $G$ is infinite and virtually cyclic, then $\omega_S(G)=0$ and the
trivial subgroup has the same exponent, so no finite generating set has a
growth gap.
\end{remark}

\bibliographystyle{alpha}
\bibliography{bib}

\end{document}